\numberwithin{equation}{section}
\newtheorem{corollary}{Corollary}[section]
\newtheorem{definition}[corollary]{Definition}
\newtheorem{lemma}[corollary]{Lemma}
\newtheorem{remark}[corollary]{Remark}
\newtheorem{thm}[corollary]{Theorem}
\newfont{\sBlackboard}{msbm10 scaled 900}
\newcommand{\mylabel}[1]{\label{#1}
            \ifx\undefined\stillediting
            \else \fbox{$#1$}\fi }
\newcommand{\BE}{\begin{equation}}
\newcommand{\EEQ}{\end{equation}}
\newcommand{\rfb}[1]{\mbox{\rm
   (\ref{#1})}\ifx\undefined\stillediting\else:\fbox{$#1$}\fi}
\newfont{\Blackboard}{msbm10 scaled 1200}
\newfont{\roma}{cmr10 scaled 1200}
\newcommand{\bb}{\begin{equation}}
\newcommand{\bbb}{\end{equation}}
\newcommand{\mm}    {{\hbox{\hskip 0.5pt}}}
\newcommand{\bluff} {{\hbox{\raise 15pt \hbox{\mm}}}}
\def\section{\@startsection {section}{1}{\z@}{-3.5ex plus -1ex minus
    -.2ex}{2.3ex plus .2ex}{\large\bf}}
\begin{document}
\title[Brezis-Lieb Lemma]{A   Brezis-Lieb-type Lemma in Orlicz space}
\author[A. Bahrouni]{Anouar Bahrouni}
\address[A. Bahrouni]{Mathematics Department, University of Monastir,
Faculty of Sciences, 5019 Monastir, Tunisia} \email{\tt
bahrounianouar@yahoo.fr}
\begin{abstract}
In this work, we extend the well known Brezis-Lieb Lemma to the
Orlicz space.
\end{abstract}
\keywords{Br\'ezis-Lieb Lemma, Orlicz space.\\
\phantom{aa} 2010 AMS Subject Classification: Primary 46E30,
Secondary 46G05, 58C20} \maketitle
\section{Introduction and main result}
The Brezis-Lieb lemma, see \cite{1},  has major applications mainly
in calculus of variations, see \cite{b,l}. To describe a special
case of this theorem, suppose that $\Omega\subset \mathbb{R}^{N}$ is
a domain, $p>1$, $f(t)=|t|^{p}$ for $t\in \mathbb{R}$, $(u_{n})$ a
bounded sequence in $L^{p}(\Omega)$ and $u_{n} \rightarrow u \in
L^{p}(\Omega)$ a.e. on $\Omega$. If one denotes by
$\mathcal{F}:L^{p}(\Omega) \rightarrow L^{1}(\Omega)$ superposition
operator f generates, i.e., $\mathcal{F}(v)(x):=f((v)(x))$, then
$u\in L^{p}(\Omega) $ and
$$\mathcal{F}(u_{n})-\mathcal{F}(u_{n}-u) \rightarrow \mathcal{F}(u) \ \ \mbox{in} \ \ L^{1}(\Omega), \ \ \mbox{as} \ \ n\rightarrow +\infty.$$
The main purpose of this note is to give a similar result in the
Orlicz
space. \\
To introduce our main result more precisely, we first give some
basic definitions and properties concerning Orlicz space. We start
by recalling the definition of the well-known Orlicz functions.
\begin{definition}
$(1)$ $G:\mathbb{R}_{+}\rightarrow \mathbb{R}_{+}$ is called an
Orlicz
function if it has the following properties:\\
$(H_{1})$ $G$ is continuous, convex, increasing and $G(0)=0$.\\
$(H_{2})$ $G$ satisfies the $\Delta_{2}-$condition, that is there
exists $K>2$ such that
$$G(2x)\leq KG(x) \ \ \mbox{for all} \ \ x\in \mathbb{R}_{+}.$$
$(H_{3})$ $\displaystyle \lim_{x\rightarrow 0}\frac{G(x)}{x}=0.$\\
\\
$(2)$ given $G$ an Orlicz function, we define the complementary
function $G^{\ast}$ as
$$G^{\ast}(a)=\sup\{at-G(t), \ \ t>0\}.$$
\end{definition}
\begin{remark}
$(i)$From the above definition it is immediate that the following
Young-type inequality holds
\begin{equation}\label{young}
ab\leq G(a)+G^{\ast}(b) \ \ \mbox{for every} \ \ a,b\geq0.
\end{equation}
$(ii)$ It is shown in \cite{2} that the $\Delta_{2}$ condition is
equivalent to
\begin{equation}\label{delta}
\frac{G^{'}(a)}{G(a)}\leq \frac{p}{a}, \ \ \forall a> 0,
\end{equation}
for some $p>1$.
\end{remark}
Now, we are ready to state our main result.
\begin{thm}\label{brlb}
Let $\Omega\subset \mathbb{R}^{N}$ be an open domain and $G$ be an
Orlicz function. Moreover, we assume that $G$ is of class $C^{1}$ on
$\mathbb{R}_{+}.$ Suppose that $(u_{n})$ is bounded in
$L^{G}(\Omega)$\\ and $u_{n} \rightarrow u \in L^{G}(\Omega)$ a.e.
on $\Omega$. Then
\begin{equation}
\displaystyle \lim_{n\rightarrow
+\infty}\int_{\Omega}G(|u_{n}|)dx-\int_{\Omega}G(|u_{n}-u|)dx=\int_{\Omega}G(|u|)dx.
\end{equation}
\end{thm}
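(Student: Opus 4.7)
The strategy is the standard Brezis--Lieb one, adapted to the Orlicz setting by replacing the elementary inequality $\bigl||a+b|^p-|a|^p-|b|^p\bigr|\le \varepsilon|a|^p+C_\varepsilon|b|^p$ with its Orlicz analogue. Concretely, I would first establish the pointwise estimate
\begin{equation*}
\bigl| G(|a+b|)-G(|a|)-G(|b|)\bigr|\le \varepsilon\, G(|a|)+C_\varepsilon\, G(|b|)\qquad \forall a,b\in\mathbb{R},
\end{equation*}
valid for every $\varepsilon>0$ with some $C_\varepsilon>0$ depending only on $G$. With this in hand, the rest runs exactly as in Brezis--Lieb: apply it with $a=u_n-u$ and $b=u$, define
\begin{equation*}
W_n^\varepsilon(x):=\max\Bigl\{\bigl|G(|u_n|)-G(|u_n-u|)-G(|u|)\bigr|-\varepsilon\,G(|u_n-u|),\,0\Bigr\},
\end{equation*}
note that $0\le W_n^\varepsilon\le C_\varepsilon G(|u|)\in L^1(\Omega)$, observe that $W_n^\varepsilon\to 0$ a.e.\ because $u_n\to u$ a.e.\ and $G$ is continuous, and conclude by Lebesgue dominated convergence that $\int_\Omega W_n^\varepsilon\to 0$. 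Since
\begin{equation*}
\int_\Omega \bigl|G(|u_n|)-G(|u_n-u|)-G(|u|)\bigr|\,dx \le \int_\Omega W_n^\varepsilon\,dx+\varepsilon\int_\Omega G(|u_n-u|)\,dx,
\end{equation*}
and the last integral is bounded uniformly in $n$ (because $(u_n)$ is bounded in $L^G$, $u\in L^G$, and $(H_2)$ gives $G(|u_n-u|)\le \tfrac{K}{2}(G(|u_n|)+G(|u|))$), letting $n\to\infty$ and then $\varepsilon\to 0$ yields the claim.

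The only real work is the pointwise inequality, which I would prove by splitting into two regimes according to whether $|b|\le\delta |a|$ or $|b|>\delta |a|$, the parameter $\delta=\delta(\varepsilon)$ to be fixed. In the first regime, the mean value theorem gives
\begin{equation*}
\bigl|G(|a+b|)-G(|a|)\bigr|\le G'(\xi)\bigl||a+b|-|a|\bigr|\le G'((1+\delta)|a|)\,|b|,
\end{equation*}
since $\xi$ lies between $|a|$ and $|a+b|\le (1+\delta)|a|$ and $G'$ is nondecreasing by convexity. Using $(\ref{delta})$ and the $\Delta_2$ condition to control $G'((1+\delta)|a|)\le p\,G((1+\delta)|a|)/((1+\delta)|a|)\le C\,G(|a|)/|a|$, this becomes $\bigl|G(|a+b|)-G(|a|)\bigr|\le C\delta\,G(|a|)\le \varepsilon G(|a|)$ once $\delta$ is chosen small, and dropping the nonnegative $G(|b|)$ yields the desired inequality in that regime. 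In the second regime, $|a|<|b|/\delta$, so iterating $(H_2)$ gives $G(|a+b|)\le G((1+1/\delta)|b|)\le C_\delta\, G(|b|)$ and similarly $G(|a|)\le C_\delta\,G(|b|)$, hence the left-hand side is bounded by $C_\varepsilon G(|b|)$ alone.

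The main obstacle is producing that pointwise Orlicz inequality cleanly; everything beyond it is a routine Fatou/dominated-convergence argument, and the $\Delta_2$ hypothesis $(H_2)$ together with the logarithmic-derivative bound $(\ref{delta})$ is precisely what converts the $C^1$ regularity of $G$ into the quantitative control needed in both regimes. The role of $(H_3)$ is minor but convenient: combined with $G(0)=0$ it gives $G'(0^+)=0$, so that $x\mapsto G(|x|)$ is genuinely $C^1$ on $\mathbb{R}$, which justifies the mean value step across the origin without fuss.
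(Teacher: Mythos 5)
Your argument is correct, and while the overall skeleton coincides with the paper's (a pointwise $\varepsilon$-inequality, the truncation $W_{n}^{\varepsilon}$, dominated convergence, the uniform bound on $\int_\Omega G(|u_n-u|)$ via convexity plus $\Delta_2$, then $\varepsilon\to 0$), the key technical ingredient is obtained by a genuinely different route. The paper applies the mean value theorem at the level of the sequence, $G(|u_{n}|)-G(|u_{n}-u|)=g(\xi)(|u_{n}|-|u_{n}-u|)$ with $g(\xi)\le g(|u|+2|u_{n}-u|)$, and then invokes a Young-type product lemma, $g(a)b\leq C(\epsilon G(a)+G(b/\epsilon))$, proved through the complementary function $G^{\ast}$, the Young inequality \eqref{young}, and the estimate $G^{\ast}(g(t))\leq (p-1)G(t)$ derived from \eqref{delta}; its dominating function is then $G(|u|/\epsilon)$, with $\epsilon$ sitting inside $G$. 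You instead prove directly the Orlicz analogue of the classical Brezis--Lieb pointwise inequality, $|G(|a+b|)-G(|a|)-G(|b|)|\leq \varepsilon G(|a|)+C_{\varepsilon}G(|b|)$, by a two-regime case split using only the mean value theorem, the logarithmic-derivative bound \eqref{delta}, and iterated $\Delta_2$; the conjugate function never appears, and your dominating function is $C_{\varepsilon}G(|u|)$, the $\varepsilon$-dependence being pushed into a constant. Your route is more elementary and self-contained and makes the analogy with the $L^{p}$ case transparent; the paper's route produces a product estimate of some independent interest and avoids the case analysis. Two small polish points: in the first regime the term $G(|b|)$ is not ``dropped'' but added by the triangle inequality and absorbed into $C_{\varepsilon}G(|b|)$ (so one should take $C_{\varepsilon}\geq 1$); and, exactly as in the paper, the uniform bound $\sup_{n}\int_\Omega G(|u_{n}|)<\infty$ and the integrability $G(|u|)\in L^{1}(\Omega)$ follow from norm boundedness only because $\Delta_2$ makes norm and modular boundedness equivalent, a step worth stating explicitly.
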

\section{Proof of Theorem \ref{brlb}}
First, we prove the following technical lemma.
\begin{lemma}\label{lemma}
Let $G$ be an Orlicz function and $G^{\ast}$ his complementary. Then:\\
$(i)$ $G(a+b)\leq \frac{K}{2}(G(a)+G(b))$ for every $a,b\geq 0.$\\
$(ii)$ There is $C>0$ such that for every $\epsilon \in (0,1)$ we
have
$$g(a)b\leq C(\epsilon G(a)+G(\frac{b}{\epsilon})) \ \ \mbox{for every} \ \ a,b\geq0,$$
where $g=G^{'}.$
\end{lemma}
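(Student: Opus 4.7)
The two parts are essentially independent: part $(i)$ follows from convexity together with the $\Delta_{2}$ condition, while part $(ii)$ combines the Young-type inequality \eqref{young} with the derivative bound \eqref{delta} and an $\epsilon$-scaling.

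For $(i)$, I would write $a+b=2\cdot\frac{a+b}{2}$ and apply $\Delta_{2}$ to pull the factor of $2$ out at the cost of $K$:
\[
G(a+b)\le K\,G\Bigl(\frac{a+b}{2}\Bigr).
\]
Convexity of $G$ then gives $G\bigl(\frac{a+b}{2}\bigr)\le\frac{1}{2}(G(a)+G(b))$, and combining the two estimates yields the bound claimed in $(i)$.

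For $(ii)$, my starting point is the Legendre identity $G^{\ast}(g(a))=a\,g(a)-G(a)$, valid for $C^{1}$ convex $G$ since the supremum in the definition of $G^{\ast}(g(a))$ is attained at $t=a$ (here $g=G'$ is monotone). Combined with \eqref{delta}, which rearranges to $a\,g(a)\le p\,G(a)$, this gives $G^{\ast}(g(a))\le(p-1)\,G(a)$. To introduce $\epsilon$, I would apply \eqref{young} to the pair $(b/\epsilon,\,\epsilon g(a))$, obtaining
\[
g(a)\,b=\frac{b}{\epsilon}\cdot\epsilon g(a)\le G\Bigl(\frac{b}{\epsilon}\Bigr)+G^{\ast}(\epsilon g(a)).
\]
Since $G^{\ast}$ is convex with $G^{\ast}(0)=0$, for $\epsilon\in(0,1)$ one has $G^{\ast}(\epsilon g(a))\le\epsilon\,G^{\ast}(g(a))\le\epsilon(p-1)\,G(a)$, and taking $C=\max(p-1,1)$ finishes part $(ii)$.

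The only delicate point is the scaling in $(ii)$: a direct application of the Young inequality to $g(a)\cdot b$ leaves an $\epsilon$-free term $G^{\ast}(g(a))$ on the right, which would be useless for the absorption argument that Theorem \ref{brlb} will demand. The decisive manoeuvre is therefore to redistribute the $\epsilon$ via $b\mapsto b/\epsilon$ and $g(a)\mapsto\epsilon g(a)$, so that the convexity of $G^{\ast}$ together with $G^{\ast}(0)=0$ linearizes the resulting estimate and produces the multiplicative factor $\epsilon$ in front of $G(a)$.
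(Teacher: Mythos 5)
Your proposal is correct and follows essentially the same route as the paper: part $(i)$ via $\Delta_{2}$ applied to $2\cdot\frac{a+b}{2}$ plus convexity, and part $(ii)$ via the identity $G^{\ast}(g(a))=a\,g(a)-G(a)$, the bound $a\,g(a)\le p\,G(a)$ from \eqref{delta}, and Young's inequality applied to $\epsilon g(a)\cdot\frac{b}{\epsilon}$ with $C=\max(p-1,1)$. If anything, you are slightly more explicit than the paper in justifying $G^{\ast}(\epsilon g(a))\le\epsilon\,G^{\ast}(g(a))$ through convexity and $G^{\ast}(0)=0$, a step the paper uses without comment.
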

\begin{proof}
$(i)$  Invoking conditions $(H_{1})$ and $(H_{2})$, we deduce that
\begin{align*}
G(a+b)&=G(2\frac{a+b}{2})\leq K G(\frac{a+b}{2})\\
&\leq \frac{K}{2}(G(a)+G(b)).
\end{align*}
$(ii)$ In what follows we show that
\begin{equation}\label{p}
G^{\ast}(g(t))\leq (p-1)G(t), \ \ \forall t>0,
\end{equation}
where $p>1$ is given by \eqref{delta}. Fix $t>0$. Let
$h(a):=g(t)a-G(a)$. Then, it is easy to see that $h(a)\leq h(t)$ for
every $a>0$. Therefore
$$G^{\ast}(t)=g(t)t-G(t).$$
Combining the above identity with \eqref{delta} we prove \eqref{p}.
Let $a,b >0$ and $\epsilon \in (0,1)$. Then, by \eqref{young} and
\eqref{p}, we infer that
\begin{align*}
g(a)b&= \epsilon g(a) \frac{b}{\epsilon}\leq G^{\ast}(\epsilon
g(a))+G(\frac{b}{\epsilon})\\
&\leq  \epsilon G^{\ast}( g(a))+G(\frac{b}{\epsilon})\\
&\leq \epsilon (p-1)G(a)+G(\frac{b}{\epsilon})\\
&\leq \max((p-1),1)(\epsilon G(a)+ G(\frac{b}{\epsilon})).
\end{align*}
This ends the proof.
\end{proof}
\textbf{Proof of Theorem \ref{brlb} completed}:\\ \\
Using the Taylor formula, for any fixed $x\in \Omega$, we have
$$G(|u_{n}|)=G(|u_{n}-u|)+g(\xi)(|u_{n}|-|u_{n}-u|),$$
where $\xi$ is a measurable function with values between
$|u_{n}(x)|$ and $|u_{n}(x)-u(x)|$. Therefore
\begin{align}\label{eq1}
|G(|u_{n}|)-G(|u_{n}-u|)|&\leq g(\xi) |u|\\
&\leq g(|u_{n}|+|u_{n}-u|)|u|\nonumber\\
&\leq g(|u|+2|u_{n}-u|)|u|\nonumber.
\end{align}
By Lemma \ref{lemma}, for fixed $\epsilon \in (0,1)$, we get
\begin{align*}
g(2|u_{n}-u|+|u|)|u| &\leq C( \epsilon
G(2|u_{n}-u|+|u|)+G(\frac{|u|}{\epsilon}))\\
&\leq \frac{C K^{2}}{2}( \epsilon
G(|u_{n}-u|)+G(|u|)+G(\frac{|u|}{\epsilon}))\\
&\leq \frac{C K^{2}}{2}( \epsilon G(|u_{n}-u|)+2
G(\frac{|u|}{\epsilon})).
\end{align*}
It follows, using Lemma \ref{lemma} and \eqref{eq1} , that
\begin{equation*}
|G(|u_{n}|)-G(|u_{n}-u|)|\leq \frac{C K^{2}}{2}( \epsilon
G(|u_{n}-u|)+2G(\frac{|u|}{\epsilon})).
\end{equation*}
Let
$$W_{\epsilon,n}=[|G(|u_{n}|)-G(|u_{n}-u|)-G(|u|)|-\epsilon \frac{C K^{2}}{2}
G(|u_{n}-u|)]_{+},$$ where $[a]_{+}=\max(a,0).$ As $n \rightarrow
+\infty$, $W_{\epsilon,n}(x)\rightarrow 0$ a.e. on $\Omega$. On the
other hand \begin{align*} |G(|u_{n}|)-G(|u_{n}-u|)-G(|u|)|& \leq
|G(|u_{n}|)-G(|u_{n}-u|)|+G(|u|)\\
&\leq (CK^{2}+1) G(\frac{|u|}{\epsilon})+ \epsilon \frac{C K^{2}}{2}
G(|u_{n}-u|).
\end{align*}
Therefore
$$W_{\epsilon,n}x\leq (CK^{2}+1) G(\frac{|u|}{\epsilon})\in L^{1}(\Omega).$$
Then, by Lebesgue convergence theorem, $\displaystyle
\int_{\Omega}W_{\epsilon,n}(x)dx \rightarrow 0$ as $n\rightarrow
+\infty.$ From the fact that
$$\displaystyle \int_{\Omega}|G(|u_{n}|)-G(|u_{n}-u|)-G(|u|)|dx\leq \int_{\Omega}(W_{\epsilon,n}(x)
+\epsilon \frac{C K^{2}}{2} G(|u_{n}-u|))dx, $$ we deduce that
$$\displaystyle \limsup_{n\rightarrow +\infty}\int_{\Omega}|G(|u_{n}|)-G(|u_{n}-u|)-G(|u|)|dx\leq \epsilon C^{'} K, $$
for some positive constant $C^{'}$. Letting $\epsilon \rightarrow0$
we complete the proof.

\end{document}